\documentclass[11pt]{article}

\usepackage[margin=1.15in]{geometry}
\usepackage{amsmath,amssymb,amsthm,mathtools}
\usepackage[colorlinks=true,citecolor=blue,linkcolor=blue,urlcolor=blue]{hyperref}
\usepackage{microtype}
\usepackage{setspace}
\newtheorem{theorem}{Theorem}[section]

\newtheorem{corollary}[theorem]{Corollary}
\newtheorem{lemma}[theorem]{Lemma}
\newtheorem{question}[theorem]{Question}

\theoremstyle{remark}

\newtheorem{fact}[theorem]{Fact}

\newcommand{\F}{\mathbb F}
\DeclareMathOperator{\Tr}{Tr}
\DeclareMathOperator{\Nm}{Nm}

\title{Polynomially superlinear growth of set-coloring Ramsey numbers\footnote{Center for Discrete Mathematics, Fuzhou University,
Fuzhou 350108, P.~R.~China. Email: {\tt linqizhong@fzu.edu.cn, 1539166573@qq.com}. Supported in part by National Key R\&D Program of China (No. 2023YFA1010202) and NSFC (No.\ 12571361).}}
\hypersetup{
  pdftitle={Polynomially superlinear growth of set-coloring Ramsey numbers},
  pdfsubject={Set-coloring Ramsey numbers near the zero-rate threshold},
  pdfkeywords={set-coloring Ramsey numbers, error-correcting codes, ovoid codes}
}

\begin{document}

\author{
Qizhong Lin \qquad and \qquad
Lin Niu}
\date{}

\maketitle

\begin{abstract}
The set-coloring Ramsey number $R(k;r,s)$ is the least $N$
such that every assignment of an $s$-element subset of $[r]$ to each edge of $K_N$ yields a copy of $K_k$
whose edges share a common color.
For every fixed prime power $q$, we construct infinitely many
positive integer triples $(r,j,s)$ with
$j\sim(q-1)^{-2/3}r^{1/3}$ and $s=(1-1/q)(r-j)$
such that $R(q+1;r,s)=\Theta_q(r^{4/3})$.
For $q=3$, this answers in the affirmative a question of Conlon,
Fox, Pham and Zhao, showing that polynomially superlinear growth for $R(4;r,2(r-j)/3)$ already occurs at the scale
\(j=\Theta(r^{1/3})\). Moreover, along the same sequence, the maximum size of a $q$-ary code
of length $r$ and minimum Hamming distance at least $s$ is $(1+o(1))(q-1)^{4/3}r^{4/3}$.

\medskip
\textbf{Keywords:} Set-coloring Ramsey numbers, error-correcting
codes, ovoid codes.

\end{abstract}

\section{Introduction}

For positive integers $k,r,s$ with $r>s$, the set-coloring Ramsey
number $R(k;r,s)$ is the least $N$ such that every assignment of an
$s$-element subset of $[r]=\{1,\ldots,r\}$ to each edge of $K_N$
yields a copy of $K_k$ whose edges share a common color.
The case $s=1$ is the classical $r$-color Ramsey number.
The other extreme, $s=r-1$, was studied by Erd\H{o}s, Hajnal and
Rado~\cite{EHR}; Erd\H{o}s and Szemer\'edi~\cite{ESzemeredi} later
proved $2^{ck/r}\le R(k;r,r-1)\le2^{c'k\log r/r}$ for positive
absolute constants $c,c'$, with $k$ sufficiently large in terms of $r$.
Arag\~ao, Collares, Marciano, Martins and Morris~\cite{Aragao}
introduced a random set-coloring construction that improves the
general lower bounds, notably when \(s/r\) is close to one.
For further work on set-colorings and related Ramsey problems, see
\cite{AEGM,BucicKhamseh,BustamanteStein,HeMao2025,PikhurkoStadenYilma,XuShaoSuLi,ZhaoXieXuChen}
and the references therein.

For fixed \(k\ge3\), the behavior changes at \(s/r=1-1/(k-1)\).
Conlon, Fox, He, Mubayi, Suk and Verstra\"ete~\cite{CFHMSV}
proved that for all sufficiently large \(r\),
\[
R(k;r,s)\ge2^{cr}
\]
whenever \(s/r\le1-1/(k-1)-\varepsilon\), where \(\varepsilon>0\)
is fixed and \(c=c(k,\varepsilon)>0\).
They also showed that \(R(k;r,s)\) is bounded in terms of \(k\)
and \(\varepsilon\) when \(s/r\ge1-1/(k-1)+\varepsilon\).

We focus on the onset of polynomially superlinear growth just below
this threshold. Write $k=q+1$, so that the threshold becomes
$s/r=1-1/q$.

The value $1-1/q$ is also the classical zero-rate threshold
for the relative minimum distance of $q$-ary codes. A $q$-ary code of
length $r$ is a subset of $[q]^r$, and its minimum Hamming distance
is the least number of coordinates in which two distinct codewords
differ. Let $A_q(r,d)$ denote the maximum size of such a code with
minimum distance at least $d$, with the convention
$A_q(r,d)=A_q(r,\lceil d\rceil)$ for real $d$.
The Plotkin bound, originally proved for binary
codes~\cite{Plotkin}, and its $q$-ary extension rule out positive
asymptotic rate when the minimum distance is at least
$(1-1/q)r$ (see~\cite{CFPZ}).

The connection with set-coloring Ramsey numbers is made explicit
by the inequality
\begin{equation}\label{eq:code-Ramsey-lower}
R(q+1;r,s)>A_q(r,s),
\end{equation}
observed in~\cite{CFHMSV}.
Indeed, take the codewords as vertices and assign to each edge $xy$
an $s$-element subset of the coordinates in which $x$ and $y$ differ.
A copy of $K_{q+1}$ with common color $c$ would then have $q+1$
pairwise distinct entries in coordinate $c$, which is impossible.

Conlon, Fox, Pham and Zhao~\cite{CFPZ} proved an approximate
converse: for every integer \(q\ge2\) and \(\varepsilon>0\), there is
\(c=c(q,\varepsilon)>0\) such that, for positive integers \(r,s\)
with \(s\le(1-1/q)r\),
\begin{equation}\label{eq:CFPZ-upper}
R(q+1;r,s)\le
\max\bigl\{(1+\varepsilon)A_q(r,s-c\Delta),\,\varepsilon s\bigr\},
\qquad \Delta=(1-1/q)r-s+1.
\end{equation}
Thus coding upper bounds transfer at the cost of reducing the
minimum distance by \(c\Delta\). Related results in~\cite{CFPZ}
give \(R(3;2s,s)=(4+o(1))s\) and refine bounds of
Pang, Mahdavifar and Pradhan~\cite{Pang} for binary codes whose
minimum distance is \(r/2-\Theta(\sqrt r)\).

To examine the transition near the threshold, write
\(s=(1-1/q)(r-j)\), where \(j\ge0\) and \(s\) is a positive integer.
Then \((1-1/q)j\) is the deficit, and we ask how large \(j\) must
be for \(R(q+1;r,s)\) to grow polynomially faster than \(r\).

Balla~\cite{Balla} proved that, for fixed \(q\ge2\),
\begin{equation}\label{eq:Balla-code}
A_q\bigl(r,(1-1/q)(r-j)\bigr)
\le(2+o(1))(q-1)r
\qquad\text{if }j=o(r^{1/3}).
\end{equation}
His proof uses a bound on nearly orthogonal unit vectors and
the simplex representation of \(q\)-ary codes. For \(q=2\), this bound resolves
a conjecture of Tiet\"av\"ainen~\cite{Tietavainen}.
Combining \eqref{eq:Balla-code} with \eqref{eq:CFPZ-upper},
Balla also obtained
\begin{equation}\label{eq:Balla-Ramsey}
R\bigl(q+1;r,(1-1/q)(r-j)\bigr)
\le(2+o(1))(q-1)r
\qquad\text{if }j=o(r^{1/3}).
\end{equation}
Thus polynomially superlinear growth cannot occur when
\(j=o(r^{1/3})\).

In the binary case, a classical construction of
Sidel'nikov~\cite{Sidelnikov}
(see also \cite[Remark~4]{Balla}) shows that such growth
already occurs at the scale \(j=\Theta(r^{1/3})\).
More precisely, there is an infinite sequence of positive
integer pairs \((r,j)\) with \(j=\Theta(r^{1/3})\) and
\((r-j)/2\) a positive integer such that
\[
A_2(r,(r-j)/2)=\Omega(r^{4/3}).
\]
By \eqref{eq:code-Ramsey-lower}, the same lower bound holds
for \(R(3;r,(r-j)/2)\).

Conlon, Fox, Pham and Zhao asked whether
polynomially superlinear growth at this scale also occurs for
cliques of order four. We formalize their question as follows.

\begin{question}[Conlon, Fox, Pham and Zhao {\cite{CFPZ}}]
\label{ques:CFPZ}
Does there exist a constant $\varepsilon>0$ such that there are
infinitely many positive integer pairs $(r,j)$ with
$j=\Theta(r^{1/3})$ for which $2(r-j)/3$ is a positive integer and
$R(4;r,2(r-j)/3)\ge r^{1+\varepsilon}$?
\end{question}

We prove the following theorem, which answers
Question~\ref{ques:CFPZ} in the affirmative and extends the binary
sharpness phenomenon to every prime-power alphabet size.

\begin{theorem}\label{thm:main}
Fix a prime power $q$. For each positive integer $m$ with
$Q=q^m>2$, define the positive integers
$r=(Q^2+1)(Q-1)/(q-1)$,
$j=(Q-1)/(q-1)$, and
$s=Q^2(Q-1)/q$.
These parameters satisfy
\[
s=(1-1/q)(r-j)<r,
\qquad
j\sim(q-1)^{-2/3}r^{1/3}
\quad\text{as }m\to\infty.
\]
Along this sequence,
\[
R(q+1;r,s)=\Theta_q(r^{4/3})
\qquad\text{and}\qquad
A_q(r,s)=(1+o(1))(q-1)^{4/3}r^{4/3}.
\]
\end{theorem}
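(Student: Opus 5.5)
The plan is to get the lower bound from an explicit concatenated ovoid code and the upper bound from \eqref{eq:CFPZ-upper} combined with a Balla-type bound on nearly orthogonal vectors. The parameter identities are routine. Since $s=Q^2(Q-1)/q$ and $r-j=Q^2(Q-1)/(q-1)$, we get $s=(1-1/q)(r-j)$. Since $r\sim Q^3/(q-1)$ and $j\sim Q/(q-1)$, we get $j\sim (q-1)^{-2/3}r^{1/3}$, and also $Q^4\sim (q-1)^{4/3}r^{4/3}$. So everything reduces to showing $A_q(r,s)\ge Q^4$, together with matching upper bounds for $A_q$ and for the Ramsey number.

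\emph{Lower bound (construction).} Take an ovoid $\mathcal O\subset PG(3,Q)$, which exists for every prime power $Q>2$ (an elliptic quadric, for instance), and fix representatives $p_1,\dots,p_{Q^2+1}\in\F_Q^4$ of its points. The outer code is $\{(\langle a,p_i\rangle)_i : a\in\F_Q^4\}$. It is $\F_Q$-linear with $Q^4$ codewords. Every plane meets $\mathcal O$ in $1$ or $Q+1$ points, so every nonzero codeword has weight $Q^2$ or $Q^2-Q$. The inner code maps $x\in\F_Q$ to $(\Tr(x\beta))_{\beta}$ in $\F_q^{(Q-1)/(q-1)}$, where $\beta$ runs over representatives of $\F_Q^*/\F_q^*$. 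For $x\neq0$ the map $\beta\mapsto \Tr(x\beta)$ is a nonzero linear functional, so it vanishes on exactly $Q/q-1$ of the nonzero $\beta$. Hence the image has weight $(Q-Q/q)/(q-1)=Q/q$; this is the constant-weight simplex code. Concatenating gives a linear $q$-ary code of length $r$ with $Q^4$ codewords and minimum weight $(Q^2-Q)Q/q=s$. By \eqref{eq:code-Ramsey-lower}, this yields $R(q+1;r,s)>A_q(r,s)\ge Q^4=(1+o(1))(q-1)^{4/3}r^{4/3}$.

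\emph{Upper bounds.} Use the simplex representation: send each $q$-ary symbol to a vertex of a regular simplex in $\mathbb R^{q-1}$, and normalize. A code of length $r$ with minimum distance at least $(1-1/q)(r-j')$ then becomes $N$ unit vectors in $\mathbb R^{n}$, $n=(q-1)r$, whose pairwise inner products satisfy $-1/(q-1)\le\langle v_x,v_y\rangle\le j'/r=:\alpha$. The key lemma to prove is
\[
N\le(1+o(1))\,n\cdot(n\alpha)=(1+o(1))(q-1)^2rj'
\qquad\text{whenever } j'=\Theta(r^{1/3}).
\]
For $j'=j$ this gives $(1+o(1))(q-1)^2 r j\sim Q^4$, which is the claimed asymptotic for $A_q(r,s)$. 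For the Ramsey bound, apply \eqref{eq:CFPZ-upper} with $\Delta=(1-1/q)j+1=\Theta(r^{1/3})$. This lowers the distance to $(1-1/q)(r-j')$ with $j'=O_q(j)$, so the lemma gives $A_q(r,s-c\Delta)=O_q(rj)=O_q(r^{4/3})$, and hence $R(q+1;r,s)=O_q(r^{4/3})$.

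\emph{Main obstacle.} The obstacle is the sharp constant in the vector lemma. I would first adapt Balla's rank argument. Set $w_x=v_x\otimes v_x$, or better the traceless part of $v_xv_x^{T}$, and combine it with a centering step. Write $u=\sum_x v_x$; the condition $\|u\|^2\ge0$ limits how much negative inner product there can be. Then run a Cauchy--Schwarz/Gram bound on $\sum_{x\ne y}\langle v_x,v_y\rangle^2\le N(N-1)\alpha^2+(\text{correction from the negative pairs})$ against the lower bound $\|\sum_x v_xv_x^T\|_F^2\ge N^2/n$. This should give $N\lesssim n^2\alpha$ up to lower-order terms in the regime $n\alpha=\Theta(r^{1/3})$. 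If the constant comes out weaker, the $\Theta_q(r^{4/3})$ Ramsey statement still follows. The sharp statement $A_q(r,s)=(1+o(1))Q^4$ would then need a finer argument: a Delsarte linear-programming bound using the $q$-ary Krawtchouk polynomials of degree at most $2$, which is tight exactly for two-weight codes like the one constructed above.
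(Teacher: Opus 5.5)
Your lower bound is correct and is the paper's construction. You realize the ovoid geometrically, as an elliptic quadric whose planes meet it in $1$ or $Q+1$ points; the paper realizes it via norm and trace. Either way it is the same $[Q^2+1,4,Q^2-Q]_Q$ two-weight code, concatenated with the simplex code.

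\textbf{The gap is the upper bound.} Your key lemma $N\le(1+o(1))n^2\alpha$ is true, but the argument you sketch fails outright, not merely in the constant. It uses only three pieces of information: $\|\sum_x v_x\|^2\ge0$, $\|\sum_x(v_xv_x^T-I/n)\|_F^2\ge0$, and the pointwise constraint $\langle v_x,v_y\rangle\in[-1/(q-1),\alpha]$. Consider an inner-product profile with weight proportional to $1/(q-1)$ at $\alpha$ and weight proportional to $\alpha$ at $-1/(q-1)$. It has mean $0$ and second moment $\alpha/(q-1)=j'/n$, which is at least $1/n$ once $j'\ge1$. So both of your inequalities hold for every $N$. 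Concretely, your bound on the contribution of the negative pairs to $\sum_{x\ne y}\langle v_x,v_y\rangle^2$ is of order $N^2\alpha/(q-1)=N^2j'/n$, which swamps the lower bound $N^2/n$. The sketch therefore gives no bound at all, and in particular does not yield $R(q+1;r,s)=O_q(r^{4/3})$.

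Your fallback fails for the same reason. Write an admissible Delsarte polynomial $f=f_0+f_1K_1+f_2K_2$ as $At^2+Bt+C$ in the normalized inner product $t=1-qi/((q-1)r)$, which is the value of $\langle v_x,v_y\rangle$ at distance $i$. Then $A\ge0$. Measured between a fixed word and a uniformly random word, $t$ has mean $0$ and second moment $1/n$, so $f_0=A/n+C$. Average the constraints $f\le0$ at $t=\alpha$ (the minimum distance) and at $t=-1/(q-1)$ (distance $r$) with the weights above. This gives $C\le-A\alpha/(q-1)$, hence $f_0\le A(1-j')/n\le0$. So no polynomial of degree at most $2$ is admissible here. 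For instance, the two-distance polynomial $(t-\alpha)(t-\beta)$ of the concatenated code, with $\beta\approx-1/Q$, is positive on $[-1/(q-1),\beta)$.

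What is missing is a degree-$3$ linear programming bound. One option is to apply Levenshtein's bound
\[
L_3(n,\alpha)=\frac{n(1-\alpha)\bigl(2+(n+1)\alpha\bigr)}{1-n\alpha^2}
\]
to your simplex embedding. It is valid because $\alpha=\Theta(n^{-2/3})$ lies below $(\sqrt{n+3}-1)/(n+2)$. Since $n\alpha\to\infty$ and $n\alpha^2\to0$, it equals $(1+o(1))n^2\alpha=(1+o(1))(q-1)^2rj'$, which is exactly your key lemma. The other option is the $q$-ary Boyvalenkov--Danev--Stoyanova bound, as the paper uses in Lemma~\ref{lem:BDS}. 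With either bound in hand, your reduction of the Ramsey upper bound through \eqref{eq:CFPZ-upper} works as written.
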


For $q=3$, Theorem~\ref{thm:main} gives
$R(4;r,s)\ge r^{1+\varepsilon}$ along the constructed sequence
for every fixed $0<\varepsilon\le1/3$ and all sufficiently large $r$.
More generally, it shows that the exponent $1/3$ in
\eqref{eq:Balla-code} and \eqref{eq:Balla-Ramsey} is best possible
for every prime power $q$.

Our proof concatenates an ovoid code over $\F_{q^m}$ with a
$q$-ary simplex code to obtain the required lower bounds.
A linear programming bound of Boyvalenkov, Danev and
Stoyanova~\cite{BDS} for codes near the zero-rate threshold
establishes the asymptotic optimality of the constructed code.
The distance reduction in \eqref{eq:CFPZ-upper} preserves
the order of the deficit, allowing the same coding bound
to yield the required Ramsey upper bound.

The construction also gives the same order of growth throughout
local ranges of parameters. To state this extension, write
\[
r_m=\frac{(Q^2+1)(Q-1)}{q-1},
\qquad j_m=\frac{Q-1}{q-1},
\qquad Q=q^m,
\]
and define, for an integer $t$,
\[
\eta_m(t)=
\begin{cases}
j_m+t,&t\ge0,\\[1mm]
j_m-t/(q-1),&t<0.
\end{cases}
\]
The two cases correspond to appending zero coordinates and deleting
coordinates, respectively.

\begin{corollary}\label{cor:local-range}
Fix a prime power \(q\) and constants \(L>0\) and
\(B>(q-1)^{-2/3}+L\).
For all sufficiently large \(m\), suppose that \(r,j\) are positive
integers satisfying
\[
|r-r_m|\le Lr_m^{1/3},
\qquad
\eta_m(r-r_m)\le j\le Br_m^{1/3},
\]
and that \(s=(1-1/q)(r-j)\) is a positive integer.
Then, uniformly over the stated parameters,
\[
A_q(r,s)=\Theta_{q,L,B}(r^{4/3})
\qquad\text{and}\qquad
R(q+1;r,s)=\Theta_{q,L,B}(r^{4/3}).
\]
\end{corollary}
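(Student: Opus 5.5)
We prove the corollary by modifying the code of Theorem~\ref{thm:main} to get the lower bound, and by reusing the upper-bound argument (the near-threshold coding bound combined with \eqref{eq:CFPZ-upper}) for the matching upper bound. Throughout, fix $m$ large and let $C_m$ be the $q$-ary code of length $r_m$ from Theorem~\ref{thm:main}. It has minimum distance at least $s_m=(1-1/q)(r_m-j_m)$ and size $|C_m|=\Theta_q(r_m^{4/3})$.

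\textbf{Lower bound.} First observe that $A_q(r,d)$ is nonincreasing in $d$. So it suffices to build a code of length $r$, size $\Omega(r_m^{4/3})$, and minimum distance at least $(1-1/q)(r-\eta_m(r-r_m))$. This is because $j\ge\eta_m(r-r_m)$ implies $s\le(1-1/q)(r-\eta_m(r-r_m))$. Write $t=r-r_m$.

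If $t\ge0$, append $t$ zero coordinates to every word of $C_m$. The distance stays at least $s_m=(1-1/q)(r_m-j_m)=(1-1/q)(r-(j_m+t))$.

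If $t<0$, delete $|t|$ coordinates. The distance drops by at most $|t|$, so it is at least $s_m-|t|=(1-1/q)(r_m-j_m)-|t|$. The required value is $(1-1/q)(r-j_m+t/(q-1))=(1-1/q)(r_m-j_m)+(1-1/q)\,t\,(1+1/(q-1))=(1-1/q)(r_m-j_m)+t$, so the bound is exactly what we need. Since $r=\Theta(r_m)$, this gives $A_q(r,s)=\Omega_q(r^{4/3})$, and \eqref{eq:code-Ramsey-lower} transfers the same bound to $R(q+1;r,s)$.

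\textbf{Upper bound.} Here $j\le Br_m^{1/3}=O(r^{1/3})$, so the deficit $\Delta=(1-1/q)j+1=O(r^{1/3})$. We will use the linear programming bound of Boyvalenkov, Danev and Stoyanova~\cite{BDS}, the same input that gives optimality in Theorem~\ref{thm:main}. We need it in the form
\[
A_q\bigl(r,(1-1/q)r-D\bigr)=O_q\bigl(r\,(D^3/r)^{1/3}\cdot\ldots\bigr)=O_q\bigl(D\,r\bigr)\quad\text{for } D=O(r^{1/3}),
\]
or, more precisely, $O(r^{4/3})$ uniformly whenever $D\le K r^{1/3}$ with $K$ fixed.

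Then apply \eqref{eq:CFPZ-upper} with a fixed $\varepsilon$, say $\varepsilon=1$:
\[
R(q+1;r,s)\le\max\{2A_q(r,s-c\Delta),\,s\}.
\]
The new deficit is $(1-1/q)r-(s-c\Delta)=(1+c)\Delta-1=O(r^{1/3})$, with constants depending only on $q,L,B$. So both $A_q(r,s)$ and $R(q+1;r,s)$ are $O_{q,L,B}(r^{4/3})$.

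\textbf{Main obstacle.} The main obstacle is stating the LP bound of~\cite{BDS} uniformly for deficits $D\le Kr^{1/3}$ with arbitrary fixed $K$ and arbitrary length $r$. Theorem~\ref{thm:main} only needs it along the special sequence. I would extract from~\cite{BDS} a bound of the shape $A_q(r,(1-1/q)r-D)\le C_q\,\max\{r,\,D r\}$, or a polynomial in $D$ times $r$ that evaluates to $O(r^{4/3})$ at $D=\Theta(r^{1/3})$. If such a direct uniform statement is awkward, a fallback is to reduce to the sequence case. Puncturing or shortening relates $A_q$ at nearby lengths: deleting coordinates keeps the code size and lowers the distance by at most the number deleted. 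We would then compare with $A_q$ at a length where the bound is known and the deficit is still $O(r^{1/3})$.
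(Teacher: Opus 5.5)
Your proposal is essentially the paper's proof. The lower bound uses the same modification of the concatenated code: append zero coordinates, or delete coordinates, where your check that deleting $|t|$ coordinates leaves distance $s_m+t=(1-1/q)(r-\eta_m(t))$ is exactly what is needed. The upper bound likewise applies \eqref{eq:CFPZ-upper} with $\varepsilon=1$ and then the BDS bound.

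The ``main obstacle'' you flag is not actually an obstacle, and the step closing it is one line. Since $j\le Br_m^{1/3}$ and $r/r_m\to1$ uniformly, the shifted deficit $j'=(1+c)j+cq/(q-1)$ satisfies $j'\le Dr^{1/3}$ for any fixed $D>(1+c)B$. Monotonicity in the distance then gives $A_q(r,s-c\Delta)\le A_q\bigl(r,(1-1/q)(r-Dr^{1/3})\bigr)$, a quantity depending on $r$ alone. Lemma~\ref{lem:BDS} applies to it with $j(r)=Dr^{1/3}$, because that lemma is stated for $r\to\infty$ through all positive integers, not just along $r_m$. This yields $(1+o(1))(q-1)^2Dr^{4/3}$ uniformly, so no puncturing fallback is needed.
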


In particular, at $r=r_m$ the conclusion holds throughout
$j_m\le j\le Br_m^{1/3}$, subject to the integrality of $s$.
The corollary gives order-of-growth estimates on these ranges;
the sharper asymptotic formula for $A_q(r,s)$ in
Theorem~\ref{thm:main} concerns the original parameter sequence.

\medskip\noindent
\textbf{Organization.}
Section~\ref{sec:prelim} collects the necessary coding and
finite-field facts.
Section~\ref{sec:proof} proves Theorem~\ref{thm:main},
with an explicit constant in the Ramsey upper bound,
and Corollary~\ref{cor:local-range}.
We mention some related open questions in Section~\ref{sec:cr}.

\section{Preliminaries}\label{sec:prelim}

We begin by fixing notation and recalling some standard facts from coding
theory and finite fields. Let $\F_Q$ denote the finite field of order $Q$.
A \textbf{linear $[n,k,d]_Q$ code} is a $k$-dimensional subspace
$C\subseteq\F_Q^n$ whose minimum Hamming distance is $d$. The code $C$
contains $Q^k$ codewords. The
\textbf{Hamming weight} of a vector is the number of its nonzero
coordinates. For a linear code, the minimum distance equals the minimum
weight of a nonzero codeword.

The ovoid construction will use the quadratic extension of $\F_Q$.
Let $Q>2$ be a prime power, and let $\Tr,\Nm:\F_{Q^2}\to\F_Q$ denote the trace and norm maps, respectively,
given by
\[
\Tr(z)=z+z^Q,\qquad \Nm(z)=z^{Q+1} \qquad (z\in\F_{Q^2}).
\]

The following standard facts about the trace and norm maps will allow us to count the zero coordinates of each codeword.

\begin{fact}[{\cite[Chapter~2, Section~3]{LN}}]
\label{prop:trace-norm}

\medskip
(i) For every $t\in\F_Q$, the equation $\Tr(z)=t$ has exactly $Q$
solutions in $\F_{Q^2}$.

\smallskip
(ii) For every $t\in\F_Q$, the equation $\Nm(z)=t$ has exactly one
solution if $t=0$, and exactly $Q+1$ solutions if $t\ne0$.
\end{fact}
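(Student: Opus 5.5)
The plan is to prove both parts directly from the structure of $\F_{Q^2}$ as a degree-two extension of $\F_Q$. The main tools are that the Frobenius $z\mapsto z^Q$ is an $\F_Q$-linear field automorphism of $\F_{Q^2}$ fixing exactly $\F_Q$, and that $\F_{Q^2}^\times$ is cyclic of order $Q^2-1$.

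For (i), I will proceed in three steps.
\begin{enumerate}
\item Show that $\Tr$ really lands in $\F_Q$. Since $z^{Q^2}=z$ on $\F_{Q^2}$, we get $(z+z^Q)^Q=z^Q+z^{Q^2}=z^Q+z$. So $\Tr(z)$ is fixed by Frobenius, hence $\Tr(z)\in\F_Q$.
\item Show that $\Tr$ is $\F_Q$-linear. Additivity holds because Frobenius is additive in characteristic $p$. Scalars $a\in\F_Q$ pull out because $a^Q=a$.
\item Count fibers. The kernel consists of the roots of the nonzero polynomial $z^Q+z$ of degree $Q$, so it has at most $Q<Q^2$ elements. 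Hence the image is a nonzero $\F_Q$-subspace of the one-dimensional space $\F_Q$, so $\Tr$ is surjective. By rank--nullity the kernel then has exactly $Q^2/Q=Q$ elements. Every fiber $\Tr^{-1}(t)$ is a coset of the kernel, so each has exactly $Q$ solutions.
\end{enumerate}

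For (ii), the case $t=0$ is immediate: $z^{Q+1}=0$ holds in a field only for $z=0$. For $t\neq0$, restrict $\Nm$ to the cyclic group $\F_{Q^2}^\times$ of order $Q^2-1$. There $\Nm$ is the group homomorphism $z\mapsto z^{Q+1}$, and I will argue in three steps.
\begin{enumerate}
\item Its image lies in $\F_Q^\times$, since $(z^{Q+1})^{Q-1}=z^{Q^2-1}=1$.
\item Its kernel is the set of $(Q+1)$-th roots of unity in $\F_{Q^2}^\times$. Since $Q+1$ divides $Q^2-1$ and the group is cyclic, this kernel has exactly $Q+1$ elements.
\item The image therefore has size $(Q^2-1)/(Q+1)=Q-1=|\F_Q^\times|$, so $\Nm$ maps onto $\F_Q^\times$. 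Each fiber over $t\neq0$ is a coset of the kernel and so has exactly $Q+1$ elements.
\end{enumerate}

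There is no real obstacle here. The only points needing care are checking that both maps take values in $\F_Q$, and the surjectivity of $\Tr$. I handle the latter with the degree bound on $z^Q+z$ rather than by appealing to separability or Galois theory. The hypothesis $Q>2$ is not needed for this fact.
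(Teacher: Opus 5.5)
Your proof is correct. The paper gives no proof of this fact and only cites Lidl--Niederreiter (Chapter~2, Section~3); your argument is essentially the standard one found there, namely surjectivity of the trace from the degree bound on the kernel polynomial $z^Q+z$ followed by counting cosets, and the norm treated as the homomorphism $z\mapsto z^{Q+1}$ on the cyclic group $\F_{Q^2}^\times$, whose kernel has exactly $Q+1$ elements.
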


To pass from codes over an extension field to codes over a fixed field,
we shall use the simplex code. Let $q$ be a prime power, let $m\ge1$,
and put $Q=q^m$. The $m$-dimensional $q$-ary simplex code has length
$(Q-1)/(q-1)$ and the following constant-weight property.

\begin{fact}[{\cite[Section~1.8]{HuffmanPless}}]
\label{prop:simplex}
Every nonzero codeword of the $m$-dimensional $q$-ary simplex code has
weight $Q/q$.
\end{fact}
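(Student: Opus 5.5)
We prove Fact~\ref{prop:simplex} by a direct count, after fixing a concrete model of the simplex code. Choose one nonzero representative $v_1,\dots,v_n\in\F_q^m$ from each one-dimensional subspace of $\F_q^m$. There are $(q^m-1)/(q-1)$ such subspaces, so $n=(Q-1)/(q-1)$. The $m$-dimensional $q$-ary simplex code is the image of the map
\[
a\mapsto(a\cdot v_1,\dots,a\cdot v_n),\qquad a\in\F_q^m,
\]
which is the code generated by the $m\times n$ matrix whose columns are the $v_i$. The map is injective because the $v_i$ span $\F_q^m$. Hence the nonzero codewords are exactly the images of the nonzero vectors $a$.

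Fix $a\ne0$. The coordinate $a\cdot v_i$ vanishes exactly when $v_i$ lies in the hyperplane $H=a^{\perp}$, which has dimension $m-1$. Scaling $v_i$ by a nonzero scalar does not change whether $a\cdot v_i=0$, so this condition depends only on the line $\langle v_i\rangle$, not on the representative chosen. The lines contained in $H$ number $(q^{m-1}-1)/(q-1)=(Q/q-1)/(q-1)$. Therefore the weight of the codeword is
\[
\frac{Q-1}{q-1}-\frac{Q/q-1}{q-1}=\frac{Q-Q/q}{q-1}=\frac{Q}{q}.
\]

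As a cross-check, the same value arises from counting vectors instead of lines. Exactly $Q-Q/q$ vectors $v\in\F_q^m$ satisfy $a\cdot v\neq0$, namely all vectors outside $H$. These vectors fall into full orbits of size $q-1$ under nonzero scaling, and each orbit meets the set $\{v_i\}$ exactly once. This again gives $(Q-Q/q)/(q-1)=Q/q$.

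There is no real obstacle here. The only point needing care is the scaling-invariance observation above, which makes the count independent of the choice of representatives. Any other generator matrix for the simplex code differs from this one by a monomial transformation, which preserves weights, so the conclusion holds for the simplex code in general.
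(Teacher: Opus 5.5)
Your proof is correct. The paper gives no proof of this fact and simply cites Huffman--Pless; your hyperplane count is the standard self-contained argument behind that citation. In it, a nonzero $a$ annihilates exactly the $(Q/q-1)/(q-1)$ projective points lying in $a^{\perp}$, which leaves $(Q-1)/(q-1)-(Q/q-1)/(q-1)=Q/q$ nonzero coordinates. Your explicit injective encoder $a\mapsto(a\cdot v_i)_i$, precomposed with an $\F_q$-linear identification $\F_Q\cong\F_q^m$, is exactly the map $\sigma$ used in Lemma~\ref{lem:construction}, so it supplies precisely the property that lemma needs.
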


We recall the part of Boyvalenkov, Danev and
Stoyanova's bound that we need. In our distance notation,
\cite[Corollary~4.6, eq.~(13)]{BDS} states that, for a fixed
integer $q\ge2$ and fixed constants $c>0$ and
$\alpha\in[1/5,1/2)$, if
\[
h=cr^\alpha,
\qquad
d=r-1-\frac{r-2+h}{q},
\]
and
\[
0\le h<
\frac{\sqrt{q^2+4(q-1)(r-2)}-q}{2},
\]
then, as $r\to\infty$,
\begin{equation}\label{eq:BDS-original}
A_q(r,d)
\le(q-1)(q+h)r+h^3
+\frac{c^5r^{5\alpha-1}}{q-1}+o(r).
\end{equation}

The following consequence will be used for both coding upper
bounds in the proof of our main theorem.

\begin{lemma}\label{lem:BDS}
Fix an integer $q\ge2$ and a constant $\gamma>0$.
If a positive real sequence $j=j(r)$ satisfies
$j\sim\gamma r^{1/3}$ as $r\to\infty$ through positive integers,
then
\[
A_q\bigl(r,(1-1/q)(r-j)\bigr)
\le(1+o(1))(q-1)^2rj.
\]
\end{lemma}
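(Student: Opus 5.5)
The plan is to derive the lemma directly from \eqref{eq:BDS-original} with $\alpha=1/3$. I first reparametrize the target distance $d=(1-1/q)(r-j)$ into the form $d=r-1-(r-2+h)/q$ used by Boyvalenkov, Danev and Stoyanova. Solving for $h$ gives
\[
\frac{r-2+h}{q}=r-1-d=\frac{r}{q}-1+\Bigl(1-\frac1q\Bigr)j,
\qquad\text{so}\qquad
h=(q-1)j-(q-2).
\]
Since $j\sim\gamma r^{1/3}$, this gives $h\sim(q-1)\gamma r^{1/3}$. The corollary, however, requires $h=cr^{\alpha}$ with $c$ a fixed constant, and our $h$ is only asymptotically of this form.

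To get around this, I will use monotonicity. Fix $\delta>0$ and put $c'=(1+\delta)(q-1)\gamma$ and $h'=c'r^{1/3}$. Let $d'=r-1-(r-2+h')/q$ be the corresponding distance. For all large $r$ we have $h'\ge h$, hence $d'\le d$, and therefore $\lceil d'\rceil\le\lceil d\rceil$. Since $A_q(r,\cdot)$ is nonincreasing in the minimum distance, and the paper uses the ceiling convention for real $d$, this gives $A_q(r,d)\le A_q(r,d')$.

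Next I check the hypothesis of \eqref{eq:BDS-original}. The admissibility condition $0\le h'<\bigl(\sqrt{q^2+4(q-1)(r-2)}-q\bigr)/2$ holds for large $r$. Indeed, the right-hand side is of order $\sqrt{(q-1)r}$, while $h'=\Theta(r^{1/3})$. Also $\alpha=1/3$ lies in $[1/5,1/2)$.

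Applying \eqref{eq:BDS-original} then yields
\[
A_q(r,d')\le(q-1)(q+h')r+h'^3+\frac{c'^5r^{2/3}}{q-1}+o(r).
\]
Now I compare the terms with the main term $(q-1)h'r=(q-1)c'r^{4/3}$. The remaining terms are $(q-1)qr$, $h'^3=c'^3r$, $O(r^{2/3})$ and $o(r)$, all of which are $O(r)=o(r^{4/3})$. Hence
\[
A_q(r,d)\le(1+\delta)(q-1)^2\gamma r^{4/3}(1+o(1)).
\]
Because $rj\sim\gamma r^{4/3}$, this is at most $(1+\delta+o(1))(q-1)^2rj$. As $\delta>0$ was arbitrary, the claimed bound $(1+o(1))(q-1)^2rj$ follows.

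The only delicate point is passing from the asymptotic $h$ to the exact form $cr^{\alpha}$ demanded by the corollary. The monotonicity trick above handles this, provided the ceiling convention for real distances is respected, and it is.
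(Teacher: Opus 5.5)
Your proposal is correct and follows essentially the same route as the paper. You rewrite $d$ in the Boyvalenkov--Danev--Stoyanova form, dominate the asymptotic $h$ by $h'=(1+\delta)(q-1)\gamma r^{1/3}$, apply monotonicity in the distance and the bound with $\alpha=1/3$, then let $\delta\to0$; the only cosmetic difference is that the paper rounds $d$ up to an integer before defining $h$ (so $h=(q-1)j+O(1)$), whereas you keep $d$ real and handle the ceiling at the monotonicity step.
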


\begin{proof}
Set
\[
d=\left\lceil(1-1/q)(r-j)\right\rceil,
\qquad
h=(q-1)r-qd-q+2.
\]
Then
\[
d=r-1-\frac{r-2+h}{q},
\qquad
h=(q-1)j+O(1)\sim(q-1)\gamma r^{1/3}.
\]

Fix $\varepsilon>0$, and put
\[
c=(1+\varepsilon)(q-1)\gamma,
\qquad
h_c=cr^{1/3},
\qquad
d_c=r-1-\frac{r-2+h_c}{q}.
\]
For all sufficiently large $r$, we have $h\le h_c$, and hence
$d\ge d_c$. Also, $h_c=O(r^{1/3})$, whereas the upper endpoint
of the admissible range is asymptotic to $\sqrt{(q-1)r}$.
Thus $h_c$ lies in that range for all sufficiently large $r$. By monotonicity in the
distance parameter and \eqref{eq:BDS-original} with $\alpha=1/3$,
\[
\begin{aligned}
A_q(r,d)\le A_q(r,d_c)
&\le(q-1)(q+cr^{1/3})r+c^3r
   +\frac{c^5r^{2/3}}{q-1}+o(r)\\
&=(q-1)c\,r^{4/3}+O(r)\\
&=(1+\varepsilon+o(1))(q-1)^2rj,
\end{aligned}
\]
where the last step uses $j\sim\gamma r^{1/3}$.
Since $\varepsilon>0$ is arbitrary, the desired bound follows.
\end{proof}

\section{Proof of the main theorem}\label{sec:proof}

We first obtain explicit parameters for the concatenated code.
We then apply Lemma~\ref{lem:BDS} to establish its asymptotic
optimality. Finally, we combine the same lemma with the conversion
from set-colorings to codes to obtain the Ramsey upper bound.

\subsection{The ovoid code and the coding lower bound}

We use an explicit algebraic realization of a classical ovoid code;
see, for example, Ding and Heng~\cite{DH}. Fix a prime power $Q>2$.
For $a,c\in\F_Q$ and
$b\in\F_{Q^2}$, define
\[
v(a,b,c)=
\bigl((a\Nm(z)+\Tr(bz)+c)_{z\in\F_{Q^2}},\,a\bigr),
\]
whose first $Q^2$ coordinates are indexed by the elements of $\F_{Q^2}$,
and the final coordinate is $a$.
Set
\[
C_Q=\{v(a,b,c):a,c\in\F_Q,\ b\in\F_{Q^2}\}.
\]

\begin{lemma}
\label{lem:ovoid}
$C_Q$ is a linear $[Q^2+1,4,Q^2-Q]_Q$ code with nonzero
weights $Q^2-Q$ and $Q^2$.
\end{lemma}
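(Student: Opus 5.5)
The plan is to verify linearity and the length directly, then compute the weight of every nonzero $v(a,b,c)$ by counting zero coordinates with Fact~\ref{prop:trace-norm}; the weight count also gives the dimension.

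\emph{Linearity and length.} Since $\Nm(z)\in\F_Q$ and $\Tr$ is $\F_Q$-linear, the map $(a,b,c)\mapsto v(a,b,c)$ from $\F_Q\times\F_{Q^2}\times\F_Q\cong\F_Q^4$ to $\F_Q^{Q^2+1}$ is $\F_Q$-linear. Hence $C_Q$ is a linear code of length $Q^2+1$. Once I show that every nonzero triple gives a nonzero codeword, the map is injective and the dimension is $4$.

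\emph{Weight computation.} I split into cases according to $a$ and $b$.

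If $a=0$ and $b=0$, then $c\neq0$. The codeword is the constant vector $c$ on the first $Q^2$ coordinates followed by $0$, so its weight is $Q^2$.

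If $a=0$ and $b\neq0$, then $z\mapsto bz$ is a bijection of $\F_{Q^2}$. By Fact~\ref{prop:trace-norm}(i), $\Tr(bz)=-c$ has exactly $Q$ solutions. The final coordinate is $0$, so the weight is $Q^2-Q$.

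If $a\neq0$, I complete the norm. Put $w=z+\beta$ and choose $\beta$ so that
\[
a\Nm(z+\beta)=a\Nm(z)+\Tr(bz)+\text{const}.
\]
Expanding gives $\Nm(z+\beta)=\Nm(z)+\Tr(z\beta^Q)+\Nm(\beta)$, since $z\beta^Q+z^Q\beta=\Tr(z\beta^Q)$. So I take $\beta^Q=b/a$, which is solvable because Frobenius is bijective. Then
\[
a\Nm(z)+\Tr(bz)+c=a\Nm(w)+c',\qquad c'=c-a\Nm(\beta)\in\F_Q.
\]
The number of zero coordinates among the first $Q^2$ is the number of $w$ with $\Nm(w)=-c'/a$. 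By Fact~\ref{prop:trace-norm}(ii) this is $1$ if $c'=0$ and $Q+1$ otherwise. The last coordinate $a$ is nonzero, so the weight is $Q^2-1$ or $Q^2-Q-1$ respectively.

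\emph{Main obstacle.} This last case contradicts the claimed weights $\{Q^2-Q,Q^2\}$ by exactly one. So either the last coordinate should contribute differently, or the first coordinates should be indexed by all of $\F_{Q^2}$ while the claimed minimum distance is off. The correct reading should give weights $Q^2-Q$ and $Q^2$ with the ovoid having $Q^2+1$ points. I would therefore re-examine the form of $v$ and expect the intended fix to be a sign convention on the final coordinate.

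A cleaner route is projective. Let the points be $(1:z:\Nm(z))$ together with the point at infinity $(0:0:1)$, and let $v(a,b,c)$ be the linear functional evaluated on these points. The value at infinity then corresponds to the coefficient of $\Nm$, which is what the extra coordinate $a$ does here. Each hyperplane meets the elliptic quadric (ovoid) of $\mathrm{PG}(3,Q)$ in $1$ or $Q+1$ points, so the weights are $Q^2+1-1=Q^2$ and $Q^2+1-(Q+1)=Q^2-Q$.

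Recounting with this picture: in the case $a\neq0$ the affine zeros number $1$ or $Q+1$, and the last coordinate is nonzero, so the total number of zeros is $1$ or $Q+1$. The weights are then $Q^2+1-1=Q^2$ and $Q^2+1-(Q+1)=Q^2-Q$, exactly as claimed. My earlier "off by one" came from an arithmetic slip: I had used $Q^2$ instead of $Q^2+1$ as the length. All three cases therefore give weights in $\{Q^2-Q,Q^2\}$ and never $0$, which yields the dimension $4$ and the minimum distance $Q^2-Q$.

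The only genuinely delicate step is the completion of the norm, that is, checking $\Nm(z+\beta)-\Nm(z)-\Nm(\beta)=\Tr(z\beta^Q)$ and choosing $\beta$ with $\beta^Q=b/a$.
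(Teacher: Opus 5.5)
Your final argument is correct and is essentially the paper's proof. It uses the same case split on $a$ and $b$, Fact~\ref{prop:trace-norm}(i) when $a=0\neq b$, and completion of the norm via $\beta^Q=b/a$, i.e.\ $\beta=b^Q/a$ (the paper's $w$), with Fact~\ref{prop:trace-norm}(ii) when $a\neq0$; injectivity, hence dimension $4$, follows because every weight is positive. In a write-up, delete the intermediate claim of weights $Q^2-1$, $Q^2-Q-1$ (you simply forgot that the nonzero last coordinate adds $1$), the ``Main obstacle'' paragraph and the unneeded projective aside, and note explicitly that both weights already occur when $a=0$, so the minimum distance is exactly $Q^2-Q$.
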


\begin{proof}
The parameter map $(a,b,c)\mapsto v(a,b,c)$ is $\F_Q$-linear, and hence
its image $C_Q$ is an $\F_Q$-linear subspace of $\F_Q^{Q^2+1}$.
To determine its dimension and minimum distance, we compute the
weight associated with each nonzero parameter triple.

If $a=0$ and $b\ne0$, multiplication by $b$ permutes $\F_{Q^2}$, so
Fact~\ref{prop:trace-norm}(i) implies that the equation
$\Tr(bz)=-c$ has $Q$ solutions.
The final coordinate is zero, so the weight is $Q^2-Q$.
If $a=b=0$ and $c\ne0$, the weight is $Q^2$.

It remains to consider $a\ne0$. Put $w=b^Q/a$. Since
$a^Q=a$ and $b^{Q^2}=b$, we have $aw=b^Q$ and $aw^Q=b$, and hence
\[
\begin{aligned}
a\Nm(z+w)
&=a(z+w)(z^Q+w^Q)\\
&=a\Nm(z)+aw^Qz+awz^Q+a\Nm(w)\\
&=a\Nm(z)+bz+b^Qz^Q+a\Nm(w).
\end{aligned}
\]
Therefore,
\[
a\Nm(z)+\Tr(bz)+c
=a\Nm(z+w)+c-a\Nm(w).
\]
Translation by $w$ permutes $\F_{Q^2}$. Therefore, by
Fact~\ref{prop:trace-norm}(ii), exactly $1$ or $Q+1$ of the
first $Q^2$ coordinates are zero,
according as $\Nm(w)-c/a$ is zero or nonzero.
Since the final coordinate is nonzero, the corresponding weights
are $Q^2$ and $Q^2-Q$.

Every nonzero parameter triple gives a vector of positive weight,
so the linear parameter map is injective. Its domain
$\F_Q\times\F_{Q^2}\times\F_Q$ has dimension $1+2+1=4$ over $\F_Q$,
and hence $\dim_{\F_Q}C_Q=4$.
Both weights occur already in the cases with $a=0$ considered above.
Since $C_Q$ is linear, its minimum distance is its least nonzero
weight, namely $Q^2-Q$.
\end{proof}

We now concatenate $C_Q$, where $Q=q^m$, with the $m$-dimensional
$q$-ary simplex code: each coordinate over $\F_Q$ is replaced by a
simplex codeword over $\F_q$. The constant nonzero weight in
Fact~\ref{prop:simplex} allows us to compute the resulting minimum
distance exactly.

\begin{lemma}\label{lem:construction}
Let \(q\) be a prime power, let \(m\) be a positive integer, and put
\(Q=q^m>2\). Set
\[
r=\frac{(Q^2+1)(Q-1)}{q-1},\qquad
j=\frac{Q-1}{q-1},\qquad
s=\frac{Q^2(Q-1)}q.
\]
Then \(r,j,s\) are positive integers with \(s<r\),
\(s=(1-1/q)(r-j)\), and there exists a linear \([r,4m,s]_q\) code.
In particular, \(A_q(r,s)\ge Q^4\), and hence
\(R(q+1;r,s)\ge Q^4+1\).
\end{lemma}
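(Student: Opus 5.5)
The plan is to concatenate the ovoid code $C_Q$ of Lemma~\ref{lem:ovoid} with the $m$-dimensional $q$-ary simplex code, and read off the parameters from the two weight facts.

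First, the arithmetic. Since $j=(Q-1)/(q-1)=1+q+\dots+q^{m-1}$, both $j$ and $r=(Q^2+1)j$ are positive integers. Also $Q^2/q=q^{2m-1}$ is an integer, so $s=q^{2m-1}(Q-1)$ is a positive integer. For the identity, note $r-j=Q^2 j=Q^2(Q-1)/(q-1)$, so $(1-1/q)(r-j)=\frac{q-1}{q}\cdot\frac{Q^2(Q-1)}{q-1}=s$. Then $s<r$ follows because $s\le(1-1/q)r<r$ with $j>0$.

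Next, the construction. Fix an $\F_q$-linear isomorphism $\phi:\F_Q\to\F_q^m$, and let $G$ be an $m\times j$ generator matrix of the simplex code. Define the encoding $\sigma:\F_Q\to\F_q^{j}$ by $\sigma(x)=\phi(x)G$. This map is $\F_q$-linear and injective, and by Fact~\ref{prop:simplex} we have $\mathrm{wt}(\sigma(x))=Q/q$ for every $x\ne0$. Apply $\sigma$ coordinatewise to each codeword of $C_Q$ to get a code $C\subseteq\F_q^{(Q^2+1)j}=\F_q^r$. Since $C_Q$ is $\F_Q$-linear, it is in particular $\F_q$-linear, and $\sigma$ is $\F_q$-linear and injective. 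Hence $C$ is an $\F_q$-linear code of dimension $\dim_{\F_q}C_Q=4m$.

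For the distance, a nonzero codeword $v\in C_Q$ with $w$ nonzero coordinates maps to a word of weight exactly $w\cdot Q/q$. By Lemma~\ref{lem:ovoid}, the minimum weight of $C$ is therefore $(Q^2-Q)Q/q=Q^2(Q-1)/q=s$. This gives a linear $[r,4m,s]_q$ code with $q^{4m}=Q^4$ codewords, so $A_q(r,s)\ge Q^4$. Finally, \eqref{eq:code-Ramsey-lower} gives $R(q+1;r,s)>Q^4$, that is, $R(q+1;r,s)\ge Q^4+1$.

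No step is a genuine obstacle. The only point that needs care is that the distance is computed exactly rather than merely bounded, and this relies on the constant-weight property in Fact~\ref{prop:simplex}.
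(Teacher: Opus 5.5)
Your proposal is correct and follows essentially the same route as the paper: you concatenate the ovoid code $C_Q$ with the $m$-dimensional $q$-ary simplex code and use its constant weight $Q/q$ to get length $r$, $\F_q$-dimension $4m$ and minimum distance exactly $(Q^2-Q)Q/q=s$. You then conclude with \eqref{eq:code-Ramsey-lower}, and your arithmetic checks ($r-j=Q^2j$, integrality of $s=q^{2m-1}(Q-1)$, and $s<r$) match the paper's.
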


\begin{proof}
Let \(C_Q\) be the ovoid code from Lemma~\ref{lem:ovoid}. Using an
\(\F_q\)-linear identification \(\F_Q\cong\F_q^m\), apply a linear simplex
encoder
\[
\sigma:\F_Q\to\F_q^{(Q-1)/(q-1)}
\]
to each coordinate of \(C_Q\), and denote the resulting concatenated code
by \(C\).

Since \(q-1\) divides \(Q-1\) and \(q\) divides \(Q\), the parameters
\(r,j,s\) are positive integers. The outer code \(C_Q\) has \(Q^2+1\)
coordinates, and each is replaced by a simplex codeword of length
\((Q-1)/(q-1)\). Hence the length of \(C\) is
\[
(Q^2+1)\frac{Q-1}{q-1}=r.
\]

Next, \(C_Q\) has dimension four over \(\F_Q\), and \(\sigma\) is
injective and \(\F_q\)-linear. Therefore \(C\) has \(\F_q\)-dimension
\(4m\), and hence contains
\[
q^{4m}=Q^4
\]
codewords.

It remains to determine the minimum distance. By Lemma~\ref{lem:ovoid},
the nonzero weights of \(C_Q\) are \(Q^2-Q\) and \(Q^2\). Since every
nonzero simplex codeword has weight \(Q/q\), an outer word of weight \(w\)
acquires weight \((Q/q)w\). Thus the nonzero weights of \(C\) are
\[
\frac Qq(Q^2-Q)=\frac{Q^2(Q-1)}q=s
\quad\text{and}\quad
\frac Qq Q^2=\frac{Q^3}{q}.
\]
So the minimum distance of \(C\) is \(s\). Moreover,
\[
(1-1/q)(r-j)
=\frac{q-1}{q}\left(
\frac{(Q^2+1)(Q-1)}{q-1}
-\frac{Q-1}{q-1}
\right)
=\frac{Q^2(Q-1)}q=s,
\]
which also shows that \(0<s<r\).

Finally, since \(C\) has \(Q^4\) codewords, we have
\(A_q(r,s)\ge Q^4\). Together with \eqref{eq:code-Ramsey-lower}, this
gives $R(q+1;r,s)\ge A_q(r,s)+1\ge Q^4+1$.
\end{proof}

\subsection{The coding upper bound}

We next show that the constructed code is asymptotically optimal.
The key point is that its deficit parameter lies at the scale required
by Lemma~\ref{lem:BDS}.

\begin{lemma}\label{lem:upper}
Fix a prime power \(q\), and let \(r,j,s\) be as in
Lemma~\ref{lem:construction}. As \(m\to\infty\), we have
\[
A_q(r,s)\le(1+o(1))Q^4.
\]
\end{lemma}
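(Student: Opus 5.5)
We apply Lemma~\ref{lem:BDS} directly to the parameters of Lemma~\ref{lem:construction}. We need $j\sim\gamma r^{1/3}$ for some constant $\gamma>0$, and we need the product $(q-1)^2rj$ to be asymptotic to $Q^4$. Since $s=(1-1/q)(r-j)$ exactly, the quantity $A_q(r,s)$ coincides with $A_q\bigl(r,(1-1/q)(r-j)\bigr)$. The lemma is stated for $r\to\infty$ through positive integers, so we regard $j$ as a function of $r$ defined along the sequence $r_m$.

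\textbf{Step 1: the asymptotics.} From $r=(Q^2+1)(Q-1)/(q-1)$ we get $r\sim Q^3/(q-1)$. Hence $Q\sim\bigl((q-1)r\bigr)^{1/3}$ and
\[
j=\frac{Q-1}{q-1}\sim\frac{Q}{q-1}\sim(q-1)^{-2/3}r^{1/3}.
\]
So we may take $\gamma=(q-1)^{-2/3}$. Moreover,
\[
(q-1)^2rj=(q-1)^2\cdot\frac{(Q^2+1)(Q-1)}{q-1}\cdot\frac{Q-1}{q-1}=(Q^2+1)(Q-1)^2=(1+o(1))Q^4.
\]

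\textbf{Step 2: extending the sequence.} Lemma~\ref{lem:BDS} is phrased for a sequence $j(r)$ defined for all large $r$. The sequence $r_m$ is only a subsequence of the integers, so we need a way to apply it. One option is to extend $j$ to all positive integers by setting $j(r)=\gamma r^{1/3}$ off the subsequence. Then $j(r)\sim\gamma r^{1/3}$ holds globally, and the conclusion of the lemma restricts to the subsequence. Alternatively, we can observe that the proof of Lemma~\ref{lem:BDS} only uses $r\to\infty$ along the sequence in question. Either way, we obtain $A_q(r,s)\le(1+o(1))(q-1)^2rj=(1+o(1))Q^4$ as $m\to\infty$.

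\textbf{Main obstacle.} The only real point is checking the hypotheses. We need $\gamma>0$ to be a fixed constant, which holds because $q$ is fixed, so Lemma~\ref{lem:BDS} applies with $\alpha=1/3$. We also need to handle the passage to the subsequence, which Step~2 does. Everything else is the routine asymptotic computation of Step~1.
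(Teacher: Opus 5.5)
Your proposal is correct and follows the paper's proof essentially verbatim. You verify $j\sim(q-1)^{-2/3}r^{1/3}$, invoke Lemma~\ref{lem:BDS}, and compute $(q-1)^2rj=(Q^2+1)(Q-1)^2=(1+o(1))Q^4$; your Step~2, which extends $j$ off the subsequence $r_m$, correctly makes explicit a technicality that the paper passes over silently.
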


\begin{proof}
The parameters in Lemma~\ref{lem:construction} satisfy
\begin{equation}\label{eq:parameter-asymptotics}
r\sim\frac{Q^3}{q-1},
\qquad j\sim\frac{Q}{q-1},
\qquad
\frac{j}{r^{1/3}}
\sim\frac{Q/(q-1)}{Q/(q-1)^{1/3}}
=(q-1)^{-2/3}.
\end{equation}
Thus $j\sim(q-1)^{-2/3}r^{1/3}$, and Lemma~\ref{lem:BDS} gives
\[
A_q(r,s)=A_q\bigl(r,(1-1/q)(r-j)\bigr)
\le (1+o(1))(q-1)^2rj.
\]
Substituting the exact expressions for \(r\) and \(j\), we obtain
\[
(q-1)^2rj=(Q^2+1)(Q-1)^2=(1+o(1))Q^4.
\]
Therefore \(A_q(r,s)\le(1+o(1))Q^4\), as desired.
\end{proof}

\subsection{Proof of Theorem~\ref{thm:main}}

To obtain the Ramsey upper bound, we use the following result \cite[Theorem~9]{CFPZ}, which converts a lower bound on the Ramsey number into a lower bound on the size of an error-correcting code whose minimum distance is only slightly below the required value.

\begin{lemma}[Conlon, Fox, Pham and Zhao {\cite{CFPZ}}]\label{lem:CFPZ-conv}
Let $q\ge2$ and $r>s\ge1$ be integers, and let $\lambda>1$.
Assume that \(N=R(q+1;r,s)-1\ge12q^2\).
If \(b=\left\lfloor4\lambda q^2\left((1-1/q)r-s+\frac{s}{N}\right)\right\rfloor\ge0\),
then
\[
A_q(r,s-2b)\ge(1-1/\lambda)N.
\]
\end{lemma}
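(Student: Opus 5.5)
This is a citation of \cite[Theorem~9]{CFPZ}, but I would reprove it along the following lines. Put $N=R(q+1;r,s)-1$ and fix a set-coloring of $K_N$ with $s$-subsets of $[r]$ that has no monochromatic $K_{q+1}$. For each color $c\in[r]$, let $G_c$ be the graph of edges whose color set contains $c$. Each $G_c$ is $K_{q+1}$-free, so Turán's theorem gives $e(G_c)\le(1-1/q)N^2/2$. On the other hand, $\sum_c e(G_c)=s\binom N2$. Hence the total \emph{deficit} satisfies
\[
\sum_c\Bigl((1-1/q)\tfrac{N^2}{2}-e(G_c)\Bigr)
=\tfrac{N^2}{2}\Bigl((1-1/q)r-s+\tfrac sN\Bigr)=:\tfrac{N^2}{2}\Delta'.
\]
So on average every $G_c$ is very close to the Turán graph. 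The plan is to turn this near-extremality into a labeling $x\mapsto(\ell_c(x))_{c\in[r]}\in[q]^r$ of the vertices. Within a single color $c$, edges of $G_c$ should almost always join vertices with different labels. Then the distance between two labels is at least $s$ minus the number of colors in which the corresponding edge is \emph{bad}, meaning its two endpoints receive the same label.

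For the labeling I would use a quantitative stability argument for $K_{q+1}$-free graphs. One option is a max-$q$-cut partition of each $G_c$, with Füredi's stability theorem: a $K_{q+1}$-free graph missing $t$ edges from Turán can be made $q$-partite by deleting at most $t$ edges. Another option, perhaps more convenient for per-vertex control, is a random $q$-clique $\{u^c_1,\dots,u^c_q\}$ of $G_c$, labeling $x$ by an index $i$ with $xu^c_i\notin G_c$; such an $i$ exists by $K_{q+1}$-freeness. Either way, the target is that the expected, or guaranteed, number of bad pairs $(c,\{x,y\})$ is at most $C q^2\cdot\tfrac{N^2}{2}\Delta'$ for an absolute constant $C$. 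The factor $q^2$ is where the hypothesis $N\ge12q^2$ and the constant $4q^2$ in $b$ should enter.

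Next I would localize these bad pairs at vertices. Define the bad degree $\beta(x)$ as the number of pairs $(c,y)$ with $xy$ bad in color $c$. Its sum over $x$ is $O(q^2N^2\Delta')$, so by Markov all but at most $N/\lambda$ vertices have $\beta(x)\le 4\lambda q^2N\Delta'$, roughly $bN$. I delete the exceptional vertices. The remaining issue is to pass from ``$x$ has few bad incidences in total'' to ``each pair $x,y$ shares at most $2b$ bad colors'', with $b$ charged to each endpoint. For this I would refine the labeling so that badness is attributed to a vertex rather than an edge: call color $c$ bad for $x$ if $x$ has at least, say, $N/(4q)$ neighbors in $G_c$ carrying its own label, or lies outside the typical structure of the near-Turán graph $G_c$. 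Then an edge $xy\in G_c$ with $\ell_c(x)=\ell_c(y)$ should force $c$ to be bad for $x$ or for $y$. The number of bad colors per surviving vertex is at most $b$ by the Markov step, so any two surviving vertices have labels at distance at least $s-2b$. Distinctness of the labels follows, since the distance is positive when $s-2b>0$; otherwise the claim is trivial. This gives $A_q(r,s-2b)\ge(1-1/\lambda)N$.

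The main obstacle is this last conversion from an edge-level count of bad pairs to a per-vertex bound on bad colors with the exact constant $4\lambda q^2$. It requires a vertex-wise form of stability: in a $K_{q+1}$-free graph with small deficit, a vertex with many neighbors in its own class must account for proportionally many missing edges. I would first try to prove this by counting, for each such vertex, the non-edges forced between its same-class neighbors and the other classes, using that these neighbors together with $x$ cannot extend to a $K_{q+1}$.
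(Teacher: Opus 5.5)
There is a genuine gap. The paper does not prove this lemma; it quotes \cite[Theorem~9]{CFPZ}. So your argument has to stand on its own, and it does not yet. The step you call ``the main obstacle'' is essentially the whole content of the lemma, and the accounting around it does not close.

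\begin{itemize}
\item Your Markov step bounds $\beta(x)$, the number of bad (color, neighbor) incidences at $x$, by about $bN$. That does not bound the number of bad \emph{colors} at $x$ by $b$. With your threshold of $N/(4q)$ same-label neighbors it only gives about $4\lambda q^2N\Delta'/(N/(4q))\approx 4qb$ such colors, where $\Delta'=(1-1/q)r-s+s/N$. Using F\"uredi's bound $\frac{N^2}{2}\Delta'$ on the total number of intra-class edges would improve this to $4q\lambda\Delta'$, but that Markov step already spends the entire budget of $N/\lambda$ discarded vertices.
\item The second clause, ``lies outside the typical structure of $G_c$'', is undefined and is not counted by $\beta$ at all. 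All bad (vertex, color) incidences must be counted together, with total at most $4q^2N\Delta'$, before a single Markov step with threshold $b+1>4\lambda q^2\Delta'$.
\item Most importantly, the implication ``$xy\in G_c$ and $\ell_c(x)=\ell_c(y)$ force $c$ to be bad for $x$ or $y$'' is only asserted. Proving it means finding a $K_{q-1}$ in the common neighborhood of $x,y$ inside the other $q-1$ classes. That requires $G_c$ to be globally close to complete $q$-partite, which fails for colors with large deficit $\sum_v\bigl((1-1/q)N-\deg_{G_c}(v)\bigr)$. Those colors must be written off for all $N$ vertices and charged to their deficit, and the remaining colors need a quantitative vertex-wise stability statement. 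This dichotomy is where $4q^2$ (and presumably $N\ge12q^2$) must come from, and none of it is carried out.
\end{itemize}

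Some pointers follow.
\begin{itemize}
\item The deficit is exactly equidistributed over vertices. Every edge carries $s$ colors, so $\sum_c\deg_{G_c}(x)=s(N-1)$, and hence $\sum_c\bigl((1-1/q)N-\deg_{G_c}(x)\bigr)=N\Delta'$ for every $x$. Individual terms can be negative, though, so Markov over colors is not immediate.
\item Badness can be made a vertex property by construction. Fix a $q$-clique $\{u_1,\ldots,u_q\}$ of $G_c$, call $x$ good if it is adjacent to exactly $q-1$ of the $u_i$, and label $x$ by the one it misses. Two adjacent good vertices with the same label $i$, together with $\{u_k:k\ne i\}$, span a $K_{q+1}$. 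The number of bad vertices is at most $\sum_i\bigl((1-1/q)N-\deg_{G_c}(u_i)\bigr)$. You would still have to choose the cliques so that these sums total at most $4q^2N\Delta'$; colors with deficit at least $N^2/(4q^2)$ can simply be written off.
\item The case $s\le2b$ is not trivial under the paper's convention that codes are sets, where $A_q(r,d)=q^r$ for $d\le1$. Take $q=2$, $s=1$, $r=5$, $\lambda=2$. Schur's bound gives $N=R_5(3)-1\ge122$, so $(1-1/\lambda)N>32=A_2(5,s-2b)$. The statement therefore needs either the convention $A_q(r,d)=\infty$ for $d\le0$ or the hypothesis $2b<s$. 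The latter holds wherever the paper applies the lemma.
\end{itemize}
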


We first prove the coding assertion, and then deduce the Ramsey bounds by combining the construction with Lemma~\ref{lem:CFPZ-conv}.

\begin{proof}[Proof of Theorem~\ref{thm:main}]
Fix a prime power $q$. For each positive integer $m$ with
$Q=q^m>2$, take $r,j,s$ as in Lemma~\ref{lem:construction}.
As $m\to\infty$, the values of $r$ form a strictly increasing
sequence. Lemma~\ref{lem:construction} verifies the asserted
integrality and the identity $s=(1-1/q)(r-j)$, while
\eqref{eq:parameter-asymptotics} gives
$r\sim Q^3/(q-1)$ and
$j\sim(q-1)^{-2/3}r^{1/3}$.
Lemmas~\ref{lem:construction} and~\ref{lem:upper} therefore yield
\[
A_q(r,s)
=(1+o(1))Q^4
=(1+o(1))(q-1)^{4/3}r^{4/3},
\]
which proves the coding assertion.

The construction also gives $R(q+1;r,s)\ge Q^4+1$.
For the upper bound, we apply Lemma~\ref{lem:CFPZ-conv}
with $\lambda=2$ and then estimate the resulting code size.

Put $N=R(q+1;r,s)-1$ and
\[
\Delta=(1-1/q)r-s+1=(1-1/q)j+1.
\]
By Lemma~\ref{lem:construction}, we have $N\ge Q^4$ and
$s=Q^2(Q-1)/q=O(Q^3)$. Hence $N\ge12q^2$ and $s/N\le1$
for all sufficiently large $m$.
Set
\[
b=\left\lfloor
8q^2\left((1-1/q)r-s+\frac{s}{N}\right)
\right\rfloor.
\]
Since $s\le(1-1/q)r$ and $s/N\le1$, we have
\[
0\le b\le8q^2\Delta.
\]
Lemma~\ref{lem:CFPZ-conv} therefore gives
$A_q(r,s-2b)\ge N/2$.
By monotonicity in the distance parameter,
\[
R(q+1;r,s)=N+1
\le2A_q(r,s-2b)+1
\le2A_q(r,s-16q^2\Delta)+1.
\]

To estimate the code size on the right, write
$s-16q^2\Delta=(1-1/q)(r-\widetilde j)$.
Indeed,
\[
\begin{aligned}
s-16q^2\Delta
&=(1-1/q)(r-j)-16q^2\bigl((1-1/q)j+1\bigr)\\
&=(1-1/q)r-(1+16q^2)(1-1/q)j-16q^2\\
&=(1-1/q)\left(r-(1+16q^2)j-\frac{16q^3}{q-1}\right),
\end{aligned}
\]
so we have
\[
\widetilde j
=(1+16q^2)j+\frac{16q^3}{q-1}
\sim(1+16q^2)(q-1)^{-2/3}r^{1/3}.
\]
The adjusted deficit thus remains at the $r^{1/3}$ scale,
so Lemma~\ref{lem:BDS} applies. Using the fact that
$\widetilde j/j\to1+16q^2$ and
$(q-1)^2rj=(Q^2+1)(Q-1)^2\sim Q^4$, we obtain
\[
\begin{aligned}
A_q(r,s-16q^2\Delta)
&\le(1+o(1))(q-1)^2r\widetilde j\\
&=(1+16q^2+o(1))(q-1)^2rj\\
&=(1+16q^2+o(1))Q^4.
\end{aligned}
\]
Together with the construction, this gives
\begin{equation}\label{eq:Ramsey-explicit-bounds}
Q^4+1
\le R(q+1;r,s)
\le\bigl(2(1+16q^2)+o(1)\bigr)Q^4.
\end{equation}
Finally, $Q^4\sim(q-1)^{4/3}r^{4/3}$, and hence
$R(q+1;r,s)=\Theta_q(r^{4/3})$, as required.
\end{proof}

\subsection{Proof of Corollary~\ref{cor:local-range}}

\begin{proof}
Put $t=r-r_m$ and $s_m=(1-1/q)(r_m-j_m)$.
Starting with the code in Lemma~\ref{lem:construction},
append $t$ zero coordinates if $t\ge0$, and delete any $-t$
coordinates if $t<0$. The resulting code has minimum distance
at least
\[
s_m-\max\{-t,0\}
=(1-1/q)\bigl(r-\eta_m(t)\bigr)
\ge (1-1/q)(r-j)=s.
\]
Since $s>0$, all $Q^4$ codewords remain distinct. Thus
\eqref{eq:code-Ramsey-lower} gives
\[
R(q+1;r,s)>A_q(r,s)\ge Q^4
=(1+o(1))(q-1)^{4/3}r^{4/3},
\]
where the last equality holds uniformly because
$r=r_m+O(r_m^{1/3})$.

For the upper bound, we use the same distance-reduction
argument as in the proof of Theorem~\ref{thm:main}.
Apply \eqref{eq:CFPZ-upper} with $\varepsilon=1$, and let
$c=c(q,1)$. With $\Delta=(1-1/q)j+1$, we have
\[
R(q+1;r,s)\le\max\{2A_q(r,s-c\Delta),s\},
\qquad
s-c\Delta=(1-1/q)(r-j'),
\]
where
\[
j'=(1+c)j+\frac{cq}{q-1}.
\]
Since $j\le Br_m^{1/3}$ and $r/r_m\to1$ uniformly,
any fixed $D>(1+c)B$ satisfies $j'\le Dr^{1/3}$
for all sufficiently large $m$. By monotonicity and
Lemma~\ref{lem:BDS},
\[
A_q(r,s-c\Delta)
\le A_q\bigl(r,(1-1/q)(r-Dr^{1/3})\bigr)
=O_{q,B}(r^{4/3}).
\]
Together with $s<r$ and \eqref{eq:code-Ramsey-lower},
this proves both upper bounds, uniformly over the stated range.
\end{proof}

\section{Concluding remarks}\label{sec:cr}

Theorem~\ref{thm:main} determines the maximum code size
asymptotically along an explicit sequence and the corresponding
Ramsey number up to a constant factor. Corollary~\ref{cor:local-range}
shows that the $r^{4/3}$ order of growth persists throughout
local ranges of lengths and deficits. The constant
$2(1+16q^2)$ in \eqref{eq:Ramsey-explicit-bounds} comes from
taking $\lambda=2$ in \cite[Theorem~9]{CFPZ}; we have not
attempted to optimize it.

Conlon, Fox, Pham and Zhao~\cite[Section~V]{CFPZ} discuss how
the growth of set-coloring Ramsey numbers changes as the deficit
increases, and ask whether the transition from linear to
polynomially superlinear growth occurs at the $r^{1/3}$ scale
for cliques of order four. Our results establish this behavior
for every prime power $q$ on the parameter ranges above.
These ranges do not, however, cover all sufficiently large
lengths: for fixed $q$, we have $r_{m+1}/r_m\to q^3$, whereas
the permitted changes in length are only $O(r_m^{1/3})$.
This leaves the following question about the full range of lengths.

\begin{question}\label{ques:all-lengths}
For each fixed prime power \(q\), does there exist a constant
\(C_q>0\) such that, for every fixed \(B>C_q\), the estimate
\(R(q+1;r,(1-1/q)(r-j))=\Theta_{q,B}(r^{4/3})\) holds uniformly
over positive integers \(r,j\) satisfying
\(C_qr^{1/3}\le j\le Br^{1/3}\) and
\((1-1/q)(r-j)\in\mathbb N\), as \(r\to\infty\)?
\end{question}

The upper bound in this question follows from
Lemma~\ref{lem:BDS} and \eqref{eq:CFPZ-upper}, by the same
endpoint argument used in the proof of
Corollary~\ref{cor:local-range}. The issue is therefore to
construct sufficiently large examples for every sufficiently
large length while keeping the deficit at the $r^{1/3}$ scale.

A separate question concerns the leading term of the Ramsey
number. Conlon, Fox, Pham and Zhao~\cite{CFPZ} suggested that
$R(q+1;r,s)=A_q(r,s)+1$ might hold when $s$ is sufficiently
close to $(1-1/q)r$. Their suggestion does not specify a precise
range. Motivated by the transition studied here, we ask for
asymptotic equality throughout the following explicit range.

\begin{question}\label{ques:asymptotic-equality}
Fix a prime power $q$ and a constant $C>0$.
Is it true that
\[
R(q+1;r,s)=(1+o(1))A_q(r,s)
\]
uniformly over positive integers $r,s$ satisfying
\(0\le(1-1/q)r-s\le Cr^{1/3}\) as $r\to\infty$?
\end{question}

Along the sequence in Theorem~\ref{thm:main}, this would give
$R(q+1;r,s)=(1+o(1))Q^4$. On the wider ranges in
Corollary~\ref{cor:local-range}, the question compares the Ramsey
number with $A_q(r,s)$ itself, whose asymptotic leading term
has not been determined there.

\section*{Declaration on the use of AI}

The lower-bound approach in this paper uses families of partitions
of a vertex set into a fixed number of classes. The key difficulty
is to obtain polynomially more vertices than partitions while
ensuring that every pair of vertices is separated by sufficiently
many of the partitions, as required near the zero-rate threshold.

We used ChatGPT (OpenAI) to discuss whether the separation
requirement above had known analogues in coding theory, to assist
with literature searches, and to improve the presentation of the
manuscript. These discussions led us to examine classical
finite-geometric constructions, including ovoid codes and their
concatenation with simplex codes.

The authors reviewed and verified all constructions, parameter
calculations, bounds, and proofs presented in the manuscript.
All cited sources were checked against the original literature.
The authors take full responsibility for the mathematical content
and conclusions of the paper.

\end{document}